%\nearrow
\documentclass[12pt]{amsart}
\usepackage{amssymb, amscd}
\usepackage{tikz}

% Andere Schrift -> etwas dicker

\usepackage{mathptmx}

% Umlaute normal eingeben
\usepackage[latin1]{inputenc}
%------------------------------------------------
%
%             Symbols in "Fraktur"
%
%------------------------------------------------

%------------------------------------------------
%
%            Symbols in "mathbb"
%
%------------------------------------------------
\newcommand{\R}{\mathbb{R}}
\newcommand{\N}{\mathbb{N}}

%\newcommand{\kx}{\Bbbk}
%\newcommand{\K}{\mathbb{K}}

%\newcommand{\kx}{\textsc{k}}

%------------------------------------------------
%
%            Symbols in "mathcal"
%
%------------------------------------------------

%------------------------------------------------
%
%           Small letters in bold
%
%------------------------------------------------

\newcommand{\gN}{\text{g}N}
\newcommand{\gM}{\text{g}M}

%------------------------------------------------
%
% Special Names
%
%------------------------------------------------

\DeclareMathOperator{\pnt}{\raise 0.5mm \hbox{\large\bf.}}

\DeclareMathOperator{\conv}{conv}

\DeclareMathOperator{\GL}{GL}
\DeclareMathOperator{\inom}{in}

\DeclareMathOperator{\gin}{gin}

\DeclareMathOperator{\GF}{GF}
\DeclareMathOperator{\gGF}{gGF}

%------------------------------------------------
%
% Proof Name aendern
%
%------------------------------------------------

% Nummerierung: römisch

\newtheorem{theorem}{\bf Theorem} [section]
\newtheorem{lemma}[theorem]{\bf Lemma}
\newtheorem{cor}[theorem]{\bf Corollary}
\newtheorem{prop}[theorem]{\bf Proposition}

\newtheorem{quest}[theorem]{\bf Question}

\theoremstyle{definition}

\newtheorem{notation}[theorem]{\bf Notation}
\newtheorem{rem}[theorem]{\bf Remark}

\theoremstyle{plain}
\newtheorem*{satz*}{Theorem}

%\numberwithin{theorem}{section}

%------------------------------------------------
%
%           We print on A4 paper
%
%------------------------------------------------
\textwidth=15 cm \textheight=22 cm \topmargin=0.5 cm
\oddsidemargin=0.5 cm \evensidemargin=0.5 cm \footskip=40 pt

% Einrückabstand
\parindent=0pt

%------------------------------------------------
%
%      END OF MACROS
%
%------------------------------------------------

\title{Lower Bounds for the Number of Generic Initial Ideals}

\author{Joke Frels}
\author{Kirsten Schmitz}

\address{Joke Frels, Fachbereich Mathematik, Technische Universit\"at Kaiserslautern, Postfach 3049, 67653 Kaiserslautern, Germany}
\email{joke.frels@yahoo.de}

\address{Kirsten Schmitz, Fachbereich Mathematik, Technische Universit\"at Kaiserslautern, Postfach 3049, 67653 Kaiserslautern, Germany}
\email{schmitz@mathematik.uni-kl.de}

%\thanks{\emph {2010 Mathematics Subject Classification:} 14T05}
\thanks{Kirsten Schmitz has been supported by the DFG grant Ga 636/3}
%\keywords{Generic initial ideals, Gr\"obner Fans}

\begin{document}

\begin{abstract}
Given a graded ideal $I$ in a polynomial ring over a field $K$ it is well known, that the number of distinct generic initial ideals of $I$ is finite. While it is known that for a given $d\in\N$ there is a global upper bound for the number of generic initial ideals of ideals generated in degree less than $d$, it is not clear how this bound has to grow with $d$. In this note we will explicitly give a family $(I(d))_{d\in\N}$ of ideals in $S=K[x,y,z]$, such that $I(d)$ is generated in degree $d$ and the number of generic initial ideals of $I(d)$ is bounded from below by a linear bound in $d$. Moreover, this bound holds for all graded ideals in $S$, which are generic in an appropriate sense.
\end{abstract}

\maketitle

\section{Introduction}

Generic initial ideals are useful tools in commutative algebra reflecting homological and algebraic properties of the original ideal in a direct way, see \cite{E,GRE}. Introduced in \cite{BAST} to study the regularity of graded ideals they also have various applications in algebraic geometry, see for example \cite{HE}. While generic initial ideals, gins for short, with respect to certain term orders (in particular, the reverse lexicographic term order) have been studied well, little is known about gins with respect to other term orders. In particular, it is a natural question of how many generic initial ideals an ideal in a fixed polynomial ring can have. For procedures such as the Gr\"obner walk for fast computations of Gr\"obner bases, see \cite{COKAMA}, it is of course useful to have information on the number of full-dimensional cones in a Gr\"obner fan. Asking for the number of generic initial ideals means studying this issue in the generic setting.

Note that it is easy to construct a family of ideals such that the number of distinct initial ideals (or equivalently the number of full-dimensional cones in the Gr\"obner fan) increases. We are, however, interested in the number of generic initial ideals of an ideal (or equivalently, maximal cones in the generic Gr\"obner fan).

Let $K$ be a field and $I$ a graded ideal in a polynomial ring in one or two variables over $K$. Then the number of generic initial ideals of $I$ can be at most two. This follows from the fact that the generic Gr\"obner fan is $\R$ in the case of one variable and either $\R^2$ or the fan in $\R^2$ consisting of the cone $\R(1,1)$ and the two maximal cones induced by it. For three or more variables, however, the number of generic initial ideals is not so easy to control. In this note we will therefore deal with the following question.

\begin{quest}\label{question}
Given a natural number $k$ is there a graded ideal in $K[x,y,z]$ with at least $k$ distinct generic initial ideals?
\end{quest}

We consider this question in a polynomial ring over a field $K$ of characteristic $0$ (the assumption on the characteristic is necessary for the proof of Theorem \ref{main}). We will give a positive answer by explicitly describing a family of monomial ideals in $K[x,y,z]$ such that for each $k\in \N$ we can point to an ideal in the family having at least $k$ distinct generic initial ideals. 

In \cite{MOMO} relying on \cite{BA} it was shown that there is a bound on the maximal degree of the elements of a universal Gr\"obner basis of $I\subset K[x,y,z]$ which is a function of $\dim(S/I)$ and the maximal degree in a given generating set of $I$. For our setting this means that the there is an upper bound for the number of generic initial ideals of an ideal generated in a given degree. So, to exhibit a family of ideals with an increasing number of gins, we certainly have to increase the degree of the generators. This is not a sufficient condition though: Computations with Singular \cite{DEGRPFSC} and Gfan \cite{JE} indicate that, for example, the number of gins of $I=(x^d,x^{d-1}y,\ldots,xy^{d-1},y^d)\subset S$ is always $3$ independently of $d$. We will show, however, that for the family $(I(d))_{d\geq3}$ with $I(d)=(x^d,x^{d-1}y,\ldots,xy^{d-1},z^d)\subset S$ there is a lower bound for the number of generic initial ideals of $I(d)$, which is linear in $d$, see Theorem \ref{main}. From this we derive that this bound also holds for a class of graded ideals satisfying a certain genericity assumption, see Theorem \ref{genericresult}.

\section{Bounds for Generic Initial Ideals}

\subsection{Genericity}\label{gammamap}

We will consider graded ideals in $S=K[x,y,z]$ with respect to the standard grading. To deal with linear coordinate changes induced by various $g\in \GL_3(K)$ simultaneously it is useful to replace $K$ by a polynomial ring $K[\Gamma]$ over $K$, where we set $\Gamma=\left\{\gamma_1,\ldots,\gamma_9\right\}$. This allows one to perform calculations in the polynomial ring over $K[\Gamma]$ and afterwards evaluate at appropriate $g\in \GL_3(K)$. In the following we will consider the $K$-algebra homomorphism induced by

\begin{eqnarray*}
\gamma: K[x,y,z] & \longrightarrow & K[\Gamma][x,y,z]\\
x        & \longmapsto     & \gamma_1x+\gamma_2y+\gamma_3z\\
y        & \longmapsto     & \gamma_4x+\gamma_5y+\gamma_6z\\
z        & \longmapsto     & \gamma_7x+\gamma_8y+\gamma_9z.
\end{eqnarray*}

Note that for an ideal $I$ the image $\gamma(I)$ is not an ideal. By abuse of notation we will, however, denote by $\gamma(I)$ the ideal generated by this image. For $g\in \GL_3(K)$ evaluating $\gamma_i$ at $g_i$ induces a linear coordinate transformation on $K[x,y,z]$, which by abuse of notation we will denote by $g$ as well. It is well known that for a given term order $\succ$ there is a Zariski-open set $\emptyset\neq U\subset \GL_3(K)$ such that $\inom_{\succ}(g(I))$ is the same ideal for all $g\in U$. This ideal $\gin_{\succ}(I)$ is the generic initial ideal of $I$ with respect to $\succ$. 

\subsection{The generic Gr\"obner fan and its graded components}

To prove the existence of a given number of distinct generic initial ideals we will use the one-to-one correspondence between gins and the maximal cones of the generic Gr\"obner fan. Let $I\subset S=K[x,y,z]$ be a graded ideal. The Gr\"obner fan $\GF(I)$ of $I$ as introduced in \cite{MORO} is the set of equivalence classes of $\omega\in\R^n$ of the relation defining $\omega,\omega'\in\R^3$ to be equivalent if $\inom_{\omega}(I)=\inom_{\omega'}(I)$. By \cite[Theorem 1.1]{TK} there exists a Zariski-open set $\emptyset\neq U\subset\GL_3(K)$ such that $\GF(g(I))$ is the same fan for all $g\in U$. This fan is called the \emph{generic Gr\"obner fan of $I$} and denoted by $\gGF(I)$. The maximal cones of $\gGF(I)$ are in one-to-one correspondence with the distinct generic initial ideals of $I$.

One description of $\GF(I)$ results from comparing initial ideals by comparing their graded components. In particular, we can consider the ``degree $d$ part'' of the Gr\"obner fan for each $d\in \N$ by looking at the equivalence relation that defines $\omega$ to be equivalent to $\omega'$ if $\inom_{\omega}(I)_d=\inom_{\omega'}(I)_d$. The same arguments showing that the Gr\"obner fan is indeed a fan can be used to prove that the set of the closures of the equivalence classes of this relation is a complete fan in $\R^3$. We will denote this fan by $\GF(I)_d$. As $\GF(I)$ is a refinement of $\GF(I)_d$ and, indeed, also $\gGF(I)$ is a refinement of $\gGF(I)_d$, the number of distinct maximal cones in $\gGF(I)_d$ provides a lower bound for the number of distinct generic initial ideals of $I$.

One advantage of studying the Gr\"obner fan of $I$ via its graded components is that the defining equations of its cones can be expressed in the Pl\"ucker coordinates of $I_d$. Recall that for a subspace $W$ of a finite dimensional $K$-vector space $V$ with a given basis $B$ of $V$ the Pl\"ucker coordinates of $W$ can be computed in the following way: Choose a basis of $W$ and express this basis in the elements of $B$ obtaining a $\dim(W)\times \dim(V)$-matrix $A$ with entries in $K$. The vector of determinants of the maximal minors of $A$ does not depend on the choice of the basis of $W$ up to nonzero scalar multiple. This vector, considered as an element in projective $(\dim(V)-1)$-space, is called \emph{the Pl\"ucker coordinates of $W$}. In our case $V$ will be $K[x,y,z]_d$ for a given $d$, and $B$ will be the basis consisting of all degree $d$-monomials indexed by their exponents. When we talk about an entry of the Pl\"ucker coordinates $P$, we will mean an entry of any representative of $P$ in homogeneous coordinates. As we will only be concerned with the question of whether an entry is zero or not, our arguments will not depend on the choice of representative.

We will use the following notation throughout this note.

\begin{notation}\label{pluck}
Fix $d\in \N$. For a monomial $x^{\nu_1}y^{\nu_2}z^{\nu_3}$ in $K[x,y,z]$ we can consider its exponent as a vector $(\nu_1,\nu_2,\nu_3)\in\N^3$. Consider all sets $J$ of exponents of degree $d$ monomials in $x,y,z$ with $|J|=d+1$ and denote by $N(d)$ the set of all such $J$. For $J\in N(d)$ let $m_J=\sum_{\nu\in J} \nu\in \N^3$ and denote by $M(d)$ the set of $m_J$ with $J\in N(d)$. For a graded ideal $I\subset S$ assume that $\dim_K(I_d)=d+1$ and let $P_J(I_d)$ be the entry of the Pl\"ucker coordinates of $I_d$ defined by $J\in N(d)$. We set $N(I,d)=\left\{J\in N(d): P_J(I_d)\neq 0\right\}$ and $$M(I,d)=\left\{m\in M(d): \exists\ J\in N(I,d): m=m_J \right\}.$$ 
\end{notation}

Each maximal cone of $\GF(I)_d$ can now be described by one element of $M(I,d)$.

\begin{prop}\label{descriptionofonegroeb}
For each maximal cone $C$ in $\GF(I)_d$ there exists a unique $m\in M(I,d)$ such that $C=\left\{\omega\in \R^3: \omega\cdot m\leq \omega\cdot m_{J} \text{ for } m_J\in M(I,d) \right\}$. The map associating to $C$ the corresponding $m$ is injective.
\end{prop}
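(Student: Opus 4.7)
The plan is to identify, for each maximal cone $C$ of $\GF(I)_d$, the element $m \in M(I,d)$ directly from the initial subspace $\inom_\omega(I)_d$ at a point $\omega$ in the relative interior of $C$. Since $\GF(I)$ refines $\GF(I)_d$, the full-dimensional cone $C$ contains a maximal cone of $\GF(I)$, on the interior of which $\inom_\omega(I)$ is a monomial ideal; because $\inom_\omega(I)_d$ is constant on all of $C$, it is a monomial subspace of $K[x,y,z]_d$ of dimension $d+1$, uniquely spanned by $\{x^\nu : \nu \in J^*\}$ for some set $J^*$ of $d+1$ exponents. The plan is then to set $m := m_{J^*}$ and verify the asserted description of $C$.

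The key technical step is to relate $J^*$ to the optimization problem $\min_{J\in N(I,d)}\omega\cdot m_J$. I would fix any basis of $I_d$ and form the matrix $A$ with $d+1$ rows indexed by these basis elements and one column per degree $d$ monomial; then $P_J(I_d)=\det(A_J)$ up to a common nonzero scalar, so $N(I,d)$ is exactly the collection of bases of the linear matroid on the columns of $A$. Row-reducing $A$ with columns ordered by increasing $\omega$-weight yields an echelon basis of $I_d$ whose leading monomials are indexed by the pivot columns; by the standard greedy algorithm for matroid bases, these pivots form the $J\in N(I,d)$ minimizing $\omega\cdot m_J=\sum_{\nu\in J}\omega\cdot\nu$. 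Thus $J^*\in N(I,d)$ and $\omega\cdot m_{J^*}\leq\omega\cdot m_J$ for every $J\in N(I,d)$. Any tie with $m_J\neq m_{J^*}$ would force $\omega$ onto the hyperplane $\omega\cdot(m_J-m_{J^*})=0$, which cannot happen on the full-dimensional interior of $C$; hence the inequality is strict there for every $m_J\in M(I,d)$ with $m_J\neq m$.

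Taking closure then produces the desired description $C=\{\omega\in\R^3:\omega\cdot m\leq\omega\cdot m_J\text{ for all } m_J\in M(I,d)\}$ with $m=m_{J^*}$. Uniqueness of $m$ in $M(I,d)$ follows because $C$ spans $\R^3$: if some $m'\in M(I,d)$ also described $C$ by these inequalities, then $\omega\cdot m=\omega\cdot m'$ on all of $C$, forcing $m=m'$. The injectivity of the assignment $C\mapsto m$ is then immediate, since two distinct maximal cones cannot equal the same intersection of half-spaces.

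I expect the main obstacle to be the matroid-greedy identification of the pivot set of the row-reduced matrix with the $\omega\cdot m_J$-minimizing basis of the column matroid of $A$. Although this is classical, some care is needed because distinct $J,J'\in N(I,d)$ may share the same sum $m_J=m_{J'}$; such ties cannot be resolved by any weight $\omega$ and would leave a naive greedy step formally ambiguous. The point is that the set $J^*$ is uniquely pinned down by the monomial subspace $\inom_\omega(I)_d$ itself, and the proposition asks for uniqueness only at the level of $M(I,d)$ rather than $N(I,d)$, so these residual ties are consistent with, and do not threaten, the statement.
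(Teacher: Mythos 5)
Your matroid-theoretic route (identifying $N(I,d)$ with the set of column bases of a coefficient matrix of $I_d$ and using the greedy algorithm to see that the monomial support $J^*$ of $\inom_\omega(I)_d$ is an $\omega$-minimum-weight basis) is a genuinely different, more self-contained argument than the paper's, which simply imports the polyhedral description of the degree-$d$ Gr\"obner cells from Maclagan--Sturmfels. However, as written it only proves one of the two inclusions. Writing $C_m=\left\{\omega:\omega\cdot m\leq\omega\cdot m_J \text{ for all } m_J\in M(I,d)\right\}$, your greedy step gives $\relint(C)\subseteq C_m$ and hence, after taking closures, $C\subseteq C_m$; ``taking closure'' cannot produce the reverse inclusion $C_m\subseteq C$. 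That reverse inclusion is the substantive half of the proposition: one must show that \emph{every} $\omega$ in the interior of $C_m$ yields the \emph{same} monomial space $\inom_\omega(I)_d$, i.e.\ that the degree-$d$ initial space is determined by the minimizing element $m$ alone. This is exactly the content the paper extracts from [MaSt], and both the asserted equality and the injectivity of $C\mapsto m$ (which you correctly reduce to that equality) hinge on it.

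The tie phenomenon you flag at the end is precisely the obstruction, and your claim that it ``does not threaten the statement'' is wrong as stated: if two distinct bases $J_1\neq J_2\in N(I,d)$ with $m_{J_1}=m_{J_2}=m$ could occur as the supports of $\inom_\omega(I)_d$ for different $\omega$ in the interior of $C_m$, then $C_m$ would contain two distinct maximal cones of $\GF(I)_d$, falsifying both the equality and the injectivity. The gap is repairable within your framework: for $\omega$ with the ground-set weights $\omega\cdot\nu$, $\nu\in\Delta$, pairwise distinct (a dense open condition on $\omega$), a matroid has a \emph{unique} minimum-weight basis; hence a vertex $m$ of $\conv(M(I,d))$ admits exactly one $J\in N(I,d)$ with $m_J=m$, so $J^*$ and therefore $\inom_\omega(I)_d$ are constant on a dense subset of the interior of $C_m$, which gives $C_m\subseteq C$. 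A smaller flaw: your strictness argument (``a tie would force $\omega$ onto a hyperplane, which cannot happen on the full-dimensional interior'') is not valid as phrased, since a point of an open set can certainly lie on a hyperplane; the correct argument uses that $J^*$ is constant on the open set $\relint(C)$, so a tie at one interior point would produce nearby interior points at which the weak inequality $\omega\cdot m_{J^*}\leq\omega\cdot m_J$ fails.
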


\begin{proof}
This statement follows from the proof of the existence of the Gr\"obner complex as explained in \cite{MAST} in a Chapter on Gr\"obner basis theory (currently in the proof of Theorem 2.4.11). In this setting the field $K$ is considered together with a valuation $v:K\longrightarrow \R\cup\left\{\infty\right\}$ and initial forms and ideals are defined with respect to the valuations of the coefficients of the polynomials. The equivalence classes of $\omega\in \R^3$ of inducing the same graded component of an initial ideal are relatively open polyhedra, see \cite{MAST}. By use of Notation \ref{pluck} the defining equations of such a Gr\"obner polyhedron $C\in\GF(I)_d$ are determined by giving a subset $A$ of $N(d)$: A vector $\omega$ is contained in the relative interior of $C$ if and only if
\begin{eqnarray*}
v(P_J(I_d))+\omega\cdot m_J & = & v(P_{J'}(I_d))+\omega\cdot m_{J'} \text{ for } J,J'\in A\\
v(P_J(I_d))+\omega\cdot m_J & < & v(P_{J'}(I_d))+\omega\cdot m_{J'} \text{ for } J\in A, J'\in N(d)\backslash A.
\end{eqnarray*}
Our case (the constant coefficient case of \cite{MAST}) corresponds to considering $K$ equipped with the trivial valuation with $v(0)=\infty$ and $v(a)=0$ for all $a\neq0$. Hence, in our setting all defining equations of Gr\"obner cones are of the form
\begin{eqnarray*}
\omega\cdot m_J & = & \omega\cdot m_{J'} \text{ for } J,J'\in A^*\\
\omega\cdot m_J & < & \omega\cdot m_{J'} \text{ for } J\in A^*, J'\in N(I,d)\backslash A,
\end{eqnarray*}
where $A^*= N(I,d)\cap A$. To define a maximal cone of $\GF(I)$ the set $A$ cannot contain $J,J'$ with $m_J\neq m_{J'}$, as otherwise there would be at least one equality among the defining relations. Thus there is a unique $m\in M(I,d)$ with $m_J=m$ for $J\in A$. As $\omega,\omega'$ are in the same relatively open cone of $\GF(I)$ if and only if the same of the above equalities and inequalities are fulfilled, the assignment of $m$ to $C$ as described above is injective. This proves the claim.
\end{proof}

With the same argument we can determine the defining inequalities of maximal cones in $\gGF(I)_d$. As $M(I,d)$ depends on the ideal in question, we need to prove that $M(g(I),d)=M(g'(I),d)$ for all $g,g'$ in some nonempty Zariski-open subset of $\GL_3(K)$. To do this note that for fixed $d\in\N$ we have $\dim_K(g(I)_d)=d+1$ for every $g\in\GL_3(K)$ if $\dim_K(I)=d+1$, so we have to consider the same $N(d)$ and $M(d)$ for every ideal $g(I)$ for $g\in\GL_3(K)$. To ensure the same for $N(g(I),d)$, and thus for $M(g(I),d)$ for generic $g$ note that the Pl\"ucker coordinates of $\gamma(I)_d$ can be considered as polynomials the $\gamma_i$. As there are only finitely many $J\in N(d)$, there exists $\emptyset\neq U\subset \GL_3(K)$ and a subset  $N\subset N(d)$ (independent of $g$) such that $P_J(g(I)_d)\neq0$ for $J\in N$ and $P_J(g(I)_d)=0$ for $J\in N(d)\backslash N$ for all $g\in U$. Thus, $N(g(I),d)$ and $M(g(I),d)$ are the same sets (respectively) for all $g\in U$.

\begin{notation}\label{genN}
The set $N$ such that $P_J(g(I)_d)\neq0$ for generic $g$ as described above we will denote by $\gN(I,d)$. Analogously we set $$\gM(I,d)=\left\{m\in M(d):\exists\ J\in \gN(I,d): m=m_J  \right\}.$$
\end{notation}

We now immediately get the analogous result to Proposition \ref{descriptionofonegroeb} for the generic case.

\begin{cor}\label{descriptionofgroeb}
For each maximal cone $C$ in $\gGF(I)_d$ there exists a unique $m\in\gM(I,d)$ such that $C=\left\{\omega\in \R^3: \omega\cdot m\leq \omega\cdot m_{J} \text{ for } m_J\in \gM(I,d) \right\}$. As in Proposition \ref{descriptionofonegroeb} the map associating to $C$ the corresponding $m$ is injective
\end{cor}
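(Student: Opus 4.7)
The plan is to deduce the statement from Proposition \ref{descriptionofonegroeb} applied to $g(I)$ for a generic $g\in\GL_3(K)$, using the fact that both the fan $\GF(g(I))_d$ and the set $M(g(I),d)$ stabilize on a nonempty Zariski-open subset of $\GL_3(K)$. Essentially all the nontrivial work has already been carried out in the paragraph preceding Notation \ref{genN}.

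First I would isolate the two relevant Zariski-open sets. By the definition of the generic Gr\"obner fan in degree $d$ (the same argument that produces $\gGF(I)$ from $\GF(g(I))$) there is a nonempty Zariski-open $U_1\subset\GL_3(K)$ on which $\GF(g(I))_d=\gGF(I)_d$. By the discussion just before Notation \ref{genN}, there is a nonempty Zariski-open $U_2\subset\GL_3(K)$ on which $N(g(I),d)=\gN(I,d)$, and hence $M(g(I),d)=\gM(I,d)$. Since $\GL_3(K)$ is irreducible, $U:=U_1\cap U_2$ is nonempty, and I would fix any $g\in U$.

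Next I would simply invoke Proposition \ref{descriptionofonegroeb} for $g(I)$: each maximal cone $C$ of $\GF(g(I))_d$ has a unique $m\in M(g(I),d)$ with
$$C=\{\omega\in\R^3 : \omega\cdot m \leq \omega\cdot m_J \text{ for all } m_J\in M(g(I),d)\},$$
and the assignment $C\mapsto m$ is injective. Substituting $\GF(g(I))_d=\gGF(I)_d$ and $M(g(I),d)=\gM(I,d)$ on the right gives precisely the description claimed by the corollary, together with the injectivity statement.

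I do not expect a real obstacle: the corollary is a bookkeeping translation of Proposition \ref{descriptionofonegroeb} once one knows that the Pl\"ucker support data stabilize on a Zariski-open set, which is exactly what was verified just before Notation \ref{genN}. The only point that needs to be mentioned is the nonemptiness of $U$, guaranteed by the irreducibility of $\GL_3(K)$ together with the nonemptiness of $U_1$ and $U_2$.
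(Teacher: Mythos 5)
Your proposal is correct and matches the paper's (essentially one-line) argument: the paper likewise derives the corollary by noting that $N(g(I),d)$, hence $M(g(I),d)$, stabilizes to $\gN(I,d)$, $\gM(I,d)$ on a nonempty Zariski-open set and then reruns the argument of Proposition \ref{descriptionofonegroeb} with the generic data. Your explicit intersection $U_1\cap U_2$ (nonempty by irreducibility of $\GL_3(K)$) is a clean way to make the "immediately" precise; one could even note that the stabilization of $N(g(I),d)$ already forces $\GF(g(I))_d$ to be constant on that set, so $U_1$ may be taken equal to $U_2$.
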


\subsection{Candidates for maximal Gr\"obner cones}

Let $I\subset S=K[x,y,z]$ be a graded ideal. As the number of generic initial ideals of $I$ is equal to the number of maximal cones in the generic Gr\"obner fan $\gGF(I)$, we can express a lower bound for the number of gins in terms of the number of cones of $\gGF(I)$. Moreover, it suffices to give a lower bound for the number of maximal cones the in degree $d$ part $\gGF(I)_d$ of the generic Gr\"obner fan for some $d$, as $\gGF(I)$ is a refinement on $\gGF_d(I)$. 

Fix $d\in \N$, $d\geq 3$, and let $N(d)$ and $m_J$ for $J\in N(d)$ be as in Notation \ref{pluck}. Consider the polytope $Q(d)=\conv(m_J: J\in N(d))\subset \R^3$. This is a $2$-dimensional polytope in the plane $H=\left\{x\in \R^3: \sum_i x_i= d(d+1)\right\}$. In this section we will determine some vertices of $Q(d)$, which will correspond to maximal cones of the generic Gr\"obner fan in degree $d$ under certain circumstances.

\begin{notation}\label{indexsets}
For $0\leq n<\frac{d}{3}$ we will use the notation $J(n)$ for the set $$\left\{(d-a-1,a,1):0\leq a\leq n-1 \right\}\cup \left\{(d-b,b,0):0\leq b\leq d-n\right\}\in N(d).$$%Moreover, let $$\JJ=\left\{J(n): 0\leq n< \frac{d}{3}\right\}.$$
\end{notation}

We will now show that every such set corresponds to a vertex of $Q(d)$.

\begin{prop}\label{verticesofq}
For every $d\geq 3$ and  $0\leq n<\frac{d}{3}$ the point $m_{J(n)}$ is a vertex of $Q(d)$.
\end{prop}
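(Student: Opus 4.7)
\medskip
\textbf{Proof Plan.}
My strategy is to exhibit, for each admissible $n$, a weight $\omega \in \R^3$ for which $J(n)$ is the unique $J \in N(d)$ minimizing $\omega \cdot m_J$. Since $Q(d)$ is the convex hull of the finite set $\{m_J : J\in N(d)\}$, such a strict unique-minimizer property is equivalent to $m_{J(n)}$ being a vertex of $Q(d)$ (and, as a by-product, yields that no other $J$ produces the same point).

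The key reduction is that $\omega \cdot m_J = \sum_{\nu \in J}\omega\cdot\nu$, so minimizing over $(d+1)$-element subsets $J$ of the degree-$d$ exponent set $\Delta_d = \{(i,j,k)\in\N^3 : i+j+k=d\}$ amounts to picking the $d+1$ exponents with smallest $\omega$-values. This minimizer is unique precisely when the $(d+1)$-st smallest $\omega$-value is strictly less than the $(d+2)$-nd.

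I would try $\omega = (0,\,1,\,d-2n+1)$, so that $\omega\cdot(i,j,k) = j + (d-2n+1)k$. A direct calculation shows that on $J(n)$ this functional takes each of the values $0,1,\ldots,d-n$ via the exponents $(d-b,b,0)$ with $0\leq b\leq d-n$, and each of the values $d-2n+1,\ldots,d-n$ via the exponents $(d-a-1,a,1)$ with $0\leq a\leq n-1$; in particular, the maximum $\omega$-value on $J(n)$ is $d-n$. For an exponent $\nu=(i,j,k)\in\Delta_d\setminus J(n)$ one checks: if $k=0$ then $j\geq d-n+1$; if $k=1$ then $j\geq n$, giving $\omega\cdot\nu \geq d-n+1$; and if $k\geq 2$ then $\omega\cdot\nu \geq 2(d-2n+1)$.

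The only non-trivial inequality is $2(d-2n+1)\geq d-n+1$, equivalently $d\geq 3n-1$, which follows at once from the hypothesis $n<d/3$. Combining these estimates, every $\nu\in\Delta_d\setminus J(n)$ satisfies $\omega\cdot\nu \geq d-n+1$, strictly greater than the maximum $d-n$ attained on $J(n)$, so $J(n)$ is the unique minimizing $(d+1)$-subset and $m_{J(n)}$ is a vertex of $Q(d)$. The only conceptual step is guessing the right $\omega$; the main (mild) obstacle is the bookkeeping to ensure that exponents with $k\geq 2$ have $\omega$-values above $d-n$, which is precisely where the hypothesis $n<d/3$ is used.
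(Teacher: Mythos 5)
Your proposal is correct and follows essentially the same route as the paper: separate the exponents in $J(n)$ from those in $\Delta\setminus J(n)$ by a linear functional and conclude that $m_{J(n)}$ is the unique minimizer among the $m_J$. Indeed, your weight $(0,1,d-2n+1)$ equals $\tfrac{1}{3}\bigl(\omega(n)-(2n-d-2)(1,1,1)\bigr)$ for the paper's $\omega(n)=(2n-d-2,2n-d+1,2d-4n+1)$, so on the hyperplane containing $\Delta$ the two functionals agree up to a positive scalar and an additive constant; your write-up merely makes explicit the ``direct calculation'' the paper leaves to the reader.
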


\begin{proof}
A vector $m_{J}$ for some $J\in N(d)$ is a vertex of $Q(d)$ if and only if there exists $\omega\in \R^3$ such that $\omega\cdot m_J<\omega\cdot m_{J'}$ for every $J'\in N(d)$ with $m_J\neq m_{J'}$. Let $$\Delta=\left\{\nu\in\N^3:\nu_1+\nu_2+\nu_3=d\right\}.$$ To show that $m_J$ is a vertex of $Q(d)$ it thus suffices to show that there exists $\omega\in \R^3$ with $\omega\cdot \nu<\omega\cdot \nu'$ for every $\nu\in J$, $\nu'\in \Delta\backslash J$, since then we have $$\omega\cdot m_J=\omega \cdot \sum_{\nu\in J} \nu=\sum_{\nu\in J} \omega\cdot \nu<\sum_{\nu\in J'}\omega\cdot \nu=\omega\cdot m_{J'},$$ where the strict inequality is true, as there is at least one $\nu\in J'\backslash J$. Geometrically this idea can viewed as finding a line in $H\subset \R^3$ separating the points in $J$ from the ones not in $J$, see Figure \ref{fig1}.

For $0\leq n<\frac{d}{3}$ let $$\omega(n)=(2n-d-2,2n-d+1,2d-4n+1)$$ and $$\lambda(n)=d+2nd-d^2-3n.$$ By direct calculation one can show that we have $\omega(n)\cdot \nu\leq \lambda(n)$ for all $\nu\in J(n)$ and that $\omega(n)\cdot \nu> \lambda(n)$ for all $\nu\in \Delta\backslash J(n)$.  Thus $m_{J(n)}$ is a vertex of $Q(d)$ with defining hyperplane $\left\{x\in \R^3: \omega(n)\cdot x=\lambda(n)\right\}$.
\end{proof}

\begin{figure}\centering
\begin{tikzpicture}[scale=0.7]

\draw (0,0) -- (7,0) -- (60:7cm) -- cycle;
\draw[very thick] (2,0) -- (10.5:8cm)
 node[pos=0.9, above] {$l$};
\draw[very thick] (2,0) -- (42:-1cm);
\node[right] at (7,0) {$(d,0,0)$};
\node[left] at (0,0) {$(0,d,0)$};
\node[right] at ++(60:7cm) {$(0,0,d)$};
%\node at (3.5,3) {$H_-$};
%\node at (6,0.5) {$H_+$};

\fill[black] (0,0) circle [radius=0.1];
\fill[black] (1,0) circle [radius=0.1];
\fill[black] (2,0) circle [radius=0.1];
\fill[black] (3,0) circle [radius=0.1];
\fill[black] (4,0) circle [radius=0.1];
\fill[black] (5,0) circle [radius=0.1];
\fill[black] (6,0) circle [radius=0.1];
\fill[black] (7,0) circle [radius=0.1];

\fill[black] (0,0)++(60:1cm) circle [radius=0.1];
\fill[black] (0,0)++(60:2cm) circle [radius=0.1];
\fill[black] (0,0)++(60:3cm) circle [radius=0.1];
\fill[black] (0,0)++(60:4cm) circle [radius=0.1];
\fill[black] (0,0)++(60:5cm) circle [radius=0.1];
\fill[black] (0,0)++(60:6cm) circle [radius=0.1];
\fill[black] (0,0)++(60:7cm) circle [radius=0.1];

\fill[black] (1,0)++(60:1cm) circle [radius=0.1];
\fill[black] (1,0)++(60:2cm) circle [radius=0.1];
\fill[black] (1,0)++(60:3cm) circle [radius=0.1];
\fill[black] (1,0)++(60:4cm) circle [radius=0.1];
\fill[black] (1,0)++(60:5cm) circle [radius=0.1];
\fill[black] (1,0)++(60:6cm) circle [radius=0.1];

\fill[black] (2,0)++(60:1cm) circle [radius=0.1];
\fill[black] (2,0)++(60:2cm) circle [radius=0.1];
\fill[black] (2,0)++(60:3cm) circle [radius=0.1];
\fill[black] (2,0)++(60:4cm) circle [radius=0.1];
\fill[black] (2,0)++(60:5cm) circle [radius=0.1];

\fill[black] (3,0)++(60:1cm) circle [radius=0.1];
\fill[black] (3,0)++(60:2cm) circle [radius=0.1];
\fill[black] (3,0)++(60:3cm) circle [radius=0.1];
\fill[black] (3,0)++(60:4cm) circle [radius=0.1];

\fill[black] (4,0)++(60:1cm) circle [radius=0.1];
\fill[black] (4,0)++(60:2cm) circle [radius=0.1];
\fill[black] (4,0)++(60:3cm) circle [radius=0.1];

\fill[black] (5,0)++(60:1cm) circle [radius=0.1];
\fill[black] (5,0)++(60:2cm) circle [radius=0.1];

\fill[black] (6,0)++(60:1cm) circle [radius=0.1];

\end{tikzpicture}
\caption{The points in $\Delta$ for $d=7$ with the line $l$ separating the points of $J(2)$ from the others.}\label{fig1}

\end{figure}
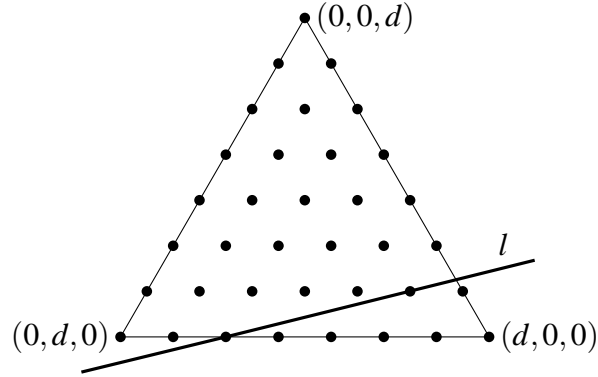

\begin{cor}\label{mehrecken}
For $d\in \N$ the polytope $Q(d)$ has at least $\frac{d}{3}$ vertices. %In particular, for every $m\in \N$, there exists $d_0\in \N$ such that $Q(d)$ has at least $m$ vertices for all $d\geq d_0$.
\end{cor}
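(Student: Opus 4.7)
The plan is to read Corollary \ref{mehrecken} as a direct counting consequence of Proposition \ref{verticesofq}. For each integer $n$ with $0\leq n<\frac{d}{3}$ the proposition already hands us a vertex $m_{J(n)}$ of $Q(d)$, so the job reduces to (a) counting how many integers $n$ satisfy $0\leq n<\frac{d}{3}$, and (b) checking that the assignment $n\mapsto m_{J(n)}$ is injective, so that distinct $n$ really do produce distinct vertices.

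For (a), the set $\{n\in\N: 0\leq n<\frac{d}{3}\}$ has cardinality $\lceil d/3\rceil$, which is at least $d/3$, giving the claimed lower bound. For (b), I would just compute the last coordinate of $m_{J(n)}=\sum_{\nu\in J(n)}\nu$. By the definition of $J(n)$ in Notation \ref{indexsets}, exactly $n$ of its elements are of the form $(d-a-1,a,1)$ and the remaining $d-n+1$ are of the form $(d-b,b,0)$; hence the third coordinate of $m_{J(n)}$ equals $n$. Different values of $n$ therefore produce vertices with different third coordinates, so $m_{J(0)},m_{J(1)},\ldots,m_{J(\lceil d/3\rceil-1)}$ are pairwise distinct points in $Q(d)$.

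Combining (a) and (b) with Proposition \ref{verticesofq} immediately yields that $Q(d)$ has at least $\lceil d/3\rceil\geq d/3$ vertices, finishing the proof. There is really no obstacle here beyond the bookkeeping: the hard work — exhibiting the separating functional $\omega(n)$ with value $\lambda(n)$ — has already been done inside Proposition \ref{verticesofq}, and the present corollary only collects the conclusion in a form that will feed into the linear lower bound on the number of generic initial ideals of $I(d)$.
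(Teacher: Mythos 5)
Your proposal is correct and matches the paper's own argument: the paper likewise invokes Proposition \ref{verticesofq} for each $0\leq n<\frac{d}{3}$ and distinguishes the vertices $m_{J(n)}$ by their third coordinate, which equals $n$. The only addition is your explicit count $\lceil d/3\rceil\geq d/3$, which the paper leaves implicit.
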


\begin{proof}
As the last coordinate of $m_{J(n)}$ is $n$, the $m_{J(n)}$ are distinct for distinct $n$. By Proposition \ref{verticesofq} each $m_{J(n)}$ for $0\leq n<\frac{d}{3}$ is a vertex of $Q(d)$.
\end{proof}

\subsection{Main Result}\label{mainsec}

Let $d\in\N$. The aim of this section is to show that for almost all graded ideals in $S=K[x,y,z]$ generated by $d+1$ homogeneous polynomials of degree $d$ the number of generic initial ideals is bounded from below by $\frac{d}{3}$, see Theorem \ref{genericresult} for the precise statement. To parametrize these ideals we want to consider $d+1$ polynomials of degree $d$ whose coefficients can be interpreted as variables, which can then be substituted by elements of $K$. More precisely, we will use the following notation.

\begin{notation}\label{extnot}
Let $\Delta=\left\{\nu\in \N^3: \nu_1+\nu_2+\nu_3=d\right\}$ and consider the polynomial ring $$L=K[b_{i\nu}: 1\leq i\leq d+1, \nu\in \Delta]$$ over $K$. Set $$f_i=\sum_{\nu\in \Delta} b_{i\nu}x^{\nu}\in L[x,y,z]$$ for $1\leq i \leq d+1$. For $q=(q_{i\nu})_{i\nu}\in K^{(d+1)|\Delta|}$ by abuse of notation we will denote $\sum_{\nu\in \Delta} q_{i\nu}x^{\nu}\in S$ by $f_i(q)$ and the ideal generated by $f_i(q)$ for $i=1,\ldots,d+1$ by $I(d,q)$.
\end{notation}

In this way the affine space $K^{(d+1)|\Delta|}$ parametrizes graded ideals generated by $d+1$ polynomials of degree $d$. This assignment of points in $K^{(d+1)|\Delta|}$ to graded ideals is of course not injective, but we do not need it to be for the following.

To start we will give a sufficient condition for $I(d,q)$ to have at least $\frac{d}{3}$ distinct generic initial ideals in terms of certain Pl\"ucker coordinates not being zero. We will then proceed by exhibiting a family of ideals that fulfill these conditions, i.e. for every $d$ we will obtain an explicit $q\in K^{(d+1)|\Delta|}$ such that Lemma \ref{nonzerojn} can be applied to $I(d,q)$. Finally we can show that this result implies that for each $d$ every ideal $I(d,q)$ fulfills these conditions for generic enough $q$.

\begin{lemma}\label{nonzerojn}
Let $d\in\N$ with $d\geq 3$, $q\in K^{(d+1)|\Delta|}$ and $I(d,q)\subset S$ as defined above. If $\dim_K(I(d,q)_d)=d+1$ and if $J(n)\in \gN(I(d,q),d)$ as defined in Notation \ref{genN} for every $J(n)$ as defined in Notation \ref{indexsets}, then $I(d,q)$ has at least $\frac{d}{3}$ distinct generic initial ideals.
\end{lemma}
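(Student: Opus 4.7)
The strategy is to use the injection from maximal cones of $\gGF(I(d,q))_d$ into $\gM(I(d,q),d)$ furnished by Corollary \ref{descriptionofgroeb}, together with the fact that $\gGF(I(d,q))$ is a refinement of $\gGF(I(d,q))_d$, to reduce the problem to exhibiting $\lceil d/3\rceil$ elements of $\gM(I(d,q),d)$ that actually arise from (distinct) maximal cones. The obvious candidates are the points $m_{J(n)}$ for $0\leq n<d/3$ introduced in Notation \ref{indexsets}.

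For each such $n$, the hypothesis $J(n)\in \gN(I(d,q),d)$ immediately places $m_{J(n)}$ in $\gM(I(d,q),d)$ by Notation \ref{genN}. To verify that $m_{J(n)}$ really corresponds to a maximal cone, I would invoke the vector $\omega(n)\in\R^3$ and scalar $\lambda(n)$ produced in the proof of Proposition \ref{verticesofq}, which satisfy $\omega(n)\cdot\nu\leq\lambda(n)$ for every $\nu\in J(n)$ and $\omega(n)\cdot\nu>\lambda(n)$ for every $\nu\in\Delta\setminus J(n)$. Summing these $d+1$ inequalities over an arbitrary $J'\in N(d)$ with $m_{J'}\neq m_{J(n)}$ yields $\omega(n)\cdot m_{J(n)}<\omega(n)\cdot m_{J'}$, and a fortiori
$$\omega(n)\cdot m_{J(n)}<\omega(n)\cdot m_{J'}\quad\text{for every }m_{J'}\in\gM(I(d,q),d)\setminus\{m_{J(n)}\}.$$
Thus $\omega(n)$ lies in the (non-empty) interior of
$$C_n=\bigl\{\omega\in\R^3: \omega\cdot m_{J(n)}\leq\omega\cdot m_J\text{ for every }m_J\in\gM(I(d,q),d)\bigr\},$$
so $C_n$ is full-dimensional, and by the description in the proof of Proposition \ref{descriptionofonegroeb} it is precisely the maximal cone of $\gGF(I(d,q))_d$ containing $\omega(n)$ in its relative interior, with associated point $m_{J(n)}\in\gM(I(d,q),d)$ in the sense of Corollary \ref{descriptionofgroeb}.

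Distinctness is then immediate: as observed in the proof of Corollary \ref{mehrecken}, the third coordinate of $m_{J(n)}$ equals $n$, so the points $m_{J(n)}$ are pairwise distinct, and the injectivity part of Corollary \ref{descriptionofgroeb} transfers this to the cones $C_n$. There are $\lceil d/3\rceil\geq d/3$ admissible values of $n$, which gives at least $d/3$ maximal cones of $\gGF(I(d,q))_d$ and, by refinement, at least $d/3$ maximal cones of $\gGF(I(d,q))$; by the bijection between the latter and the generic initial ideals of $I(d,q)$, this completes the argument. The one potentially delicate step is the transition from \emph{$m_{J(n)}$ is a vertex of $Q(d)$} to \emph{$C_n$ is a maximal cone of $\gGF(I(d,q))_d$}, which one must justify by restricting the strict separation from all of $M(d)$ to the generic subset $\gM(I(d,q),d)$ and then noting that strict separation against all competitors is exactly what is required for full-dimensionality.
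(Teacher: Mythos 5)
Your proposal is correct and follows essentially the same route as the paper's proof: both reduce to counting maximal cones of $\gGF(I(d,q))_d$ via the refinement property, use the separating vectors $\omega(n)$ from the proof of Proposition \ref{verticesofq} to show each $m_{J(n)}\in\gM(I(d,q),d)$ determines a full-dimensional cone containing $\omega(n)$, and conclude distinctness from the injectivity in Corollary \ref{descriptionofgroeb} together with $m_{J(n)}\neq m_{J(n')}$ for $n\neq n'$. Your explicit attention to restricting the strict separation from $N(d)$ to the subset $\gN(I(d,q),d)$ is a careful touch, but it does not change the argument.
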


%For simplicity denote $\gN(I(d,q),d)$

\begin{proof}
We will prove that $\frac{d}{3}$ is a lower bound for the number of full-dimensional cones in the graded component $\gGF(I(d,q))_d$ of the generic Gr\"obner fan of $I(d,q)$. As the fan $\gGF(I(d,q))$ is a refinement of $\gGF(I(d,q))_d$, this gives a lower bound on the number of full-dimensio\-nal cones in the generic Gr\"obner fan of $I(d,q)$ and, thus, for the number of generic initial ideals of $I(d,q)$.

By the proof of Proposition \ref{verticesofq} we know that for the sets $J(n)$ for $0\leq n< \frac{d}{3}$ and $$\omega(n)=(2n-d-2,2n-d+1,2d-4n+1)$$ we have $\omega(n)\cdot m_{J(n)}< \omega(n)\cdot m_{J'}$ for all $J(n)\neq J'\in N(d)$. Moreover, $m_{J(n)}\neq m_{J(n')}$ for $n\neq n'$. By assumption $J(n)\in \gN(I(d,q),d)$, so $m_{J(n)}\in \gM(I(d,q),d)$ for every $n$. Thus, by Corollary \ref{descriptionofgroeb} the $\omega(n)$ for $0\leq n< \frac{d}{3}$ are all contained in different maximal cones of $\gGF(I(d,q))_d$.
\end{proof}

We will now give a family $(I(d))_{d\in \N}$ of ideals in $K[x,y,z]$ such that $I(d)$ fulfills the conditions from Lemma \ref{nonzerojn} and thus the family is an example class providing a positive answer to Question \ref{question}.

\begin{notation}\label{family}
Consider the family of ideals $(I(d))_{d\geq 3}$ such that 
\begin{eqnarray*}
I(d) & = & (x^ay^{d-a},z^d: 0\leq a\leq d-1)\\
     & = & (y^d,xy^{d-1},x^2y^{d-2},\ldots,x^{d-1}y,z^d).
\end{eqnarray*}
Note that $I(d)$ is generated in degree $d$ by $d+1$ monomials.
\end{notation}

\begin{rem}\label{fam}
Note that $I(d)$ is equal to $I(d,q)$ for $q\in K^{(d+1)|\Delta|}$ defined as follows: Let $\nu_i=(i-1,d-i+1,0)$ for $i=1,\ldots,d$ and $\nu_{d+1}=(0,0,d)$. Consider the evaluation map $\phi:L\longrightarrow K$ mapping $b_{i\nu_i}$ to $1$ for $i=1,\ldots,d+1$ and all other independent variables to $0$. Then $I(d)=I(d,q)$ for $(q_{i\nu})_{i\nu}=(\phi(b_{i\nu}))_{i\nu}$.
\end{rem}

\begin{theorem}\label{main}
The ideal $I(d)$ as defined in Notation \ref{family} has at least $\frac{d}{3}$ distinct generic initial ideals.
\end{theorem}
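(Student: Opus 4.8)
The plan is to verify that the specific ideal $I(d)$ from Notation \ref{family} satisfies the hypotheses of Lemma \ref{nonzerojn}, after which the conclusion is immediate. There are two things to check: first, that $\dim_K(I(d)_d)=d+1$, and second, that $J(n)\in\gN(I(d),d)$ for every $n$ with $0\le n<\tfrac{d}{3}$. The dimension count is trivial, since $I(d)$ is generated in degree $d$ by the $d+1$ distinct monomials $x^ay^{d-a}$ ($0\le a\le d-1$) and $z^d$, so these monomials form a $K$-basis of $I(d)_d$ and $\dim_K(I(d)_d)=d+1$.

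The real work is the second condition, for which I would compute the relevant Pl\"ucker coordinate of $\gamma(I(d))_d$ and show it is a nonzero polynomial in the $\gamma_i$. Recall $\gamma$ sends $x\mapsto\gamma_1x+\gamma_2y+\gamma_3z$, $y\mapsto\gamma_4x+\gamma_5y+\gamma_6z$, $z\mapsto\gamma_7x+\gamma_8y+\gamma_9z$, and $\gamma(I(d))_d$ is spanned over $K[\Gamma]$ by $\gamma(x^ay^{d-a})$ for $0\le a\le d-1$ together with $\gamma(z^d)$. Fix $n$ with $0\le n<\tfrac d3$. I need to show that the maximal minor of the coefficient matrix indexed by the exponent set $J(n)=\{(d-a-1,a,1):0\le a\le n-1\}\cup\{(d-b,b,0):0\le b\le d-n\}$ does not vanish identically. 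To do this I would choose a convenient specialization of the $\gamma_i$ at which the minor is manifestly nonzero. A natural choice is to degenerate toward a coordinate transformation that is close to the identity: the term order $\omega(n)$ from Proposition \ref{verticesofq} was built precisely so that the monomials whose exponents lie in $J(n)$ are the $\omega(n)$-smallest among all degree-$d$ monomials, so the $J(n)$-minor should be the ``leading term'' of the Pl\"ucker vector under the one-parameter subgroup $\mathrm{diag}(t^{\omega(n)_1},t^{\omega(n)_2},t^{\omega(n)_3})$. Concretely, I would show that under this torus action the $J(n)$-component of the Pl\"ucker vector of $g_t(I(d))_d$, after clearing the appropriate power of $t$, has a nonzero limit as $t\to0$ — equivalently, that $\inom_{\omega(n)}(I(d))_d$ is the span of exactly the monomials indexed by $J(n)$. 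But this last statement can be checked directly on the monomial generators: for each generator $x^ay^{d-a}$ of $I(d)$ one sees which degree-$d$ monomial it ``is'' (it is a monomial already, so $\inom_{\omega(n)}$ of it is itself), and the claim reduces to checking that $\{(a,d-a,0):0\le a\le d-1\}\cup\{(0,0,d)\}$, after passing through the relevant change of filtration, produces exactly the exponent set $J(n)$ — here the characteristic-zero hypothesis enters to guarantee that the relevant binomial coefficients arising from expanding $\gamma(z^d)=(\gamma_7x+\gamma_8y+\gamma_9z)^d$ do not vanish.

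Alternatively, and perhaps more cleanly, I would invoke Remark \ref{fam}, which identifies $I(d)$ with $I(d,q)$ for an explicit $q$, and argue directly with the matrix whose rows are the coefficient vectors of $\gamma(f_i(q))$ for $i=1,\dots,d+1$ in the monomial basis of degree $d$. One exhibits a single point $g\in\GL_3(K)$ (for instance a suitably generic lower- or upper-triangular unipotent matrix, chosen so that the monomials get ``spread out'' in the right way) at which the $J(n)$-minor evaluates to a nonzero element of $K$; since a polynomial in $\Gamma$ that is nonzero at one point is not the zero polynomial, this forces $J(n)\in\gN(I(d),d)$. Either way, once all the $J(n)$ for $0\le n<\tfrac d3$ are shown to lie in $\gN(I(d),d)$, Lemma \ref{nonzerojn} applies verbatim and yields that $I(d)$ has at least $\tfrac d3$ distinct generic initial ideals.

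The main obstacle I anticipate is the bookkeeping in the non-vanishing computation: one must carefully track how $\gamma$ (or the chosen explicit $g$) acts on the $d+1$ monomial generators, identify the resulting $(d+1)\times|\Delta|$ coefficient matrix, and pin down precisely which maximal minor corresponds to $J(n)$ and why it is nonzero. The cleanest route is to reduce this to the purely combinatorial statement that $\inom_{\omega(n)}(I(d))_d$ has the monomials indexed by $J(n)$ as its basis — which is essentially a restatement of the direct calculation already carried out in the proof of Proposition \ref{verticesofq} — together with the observation that generic initial behavior dominates any single weight degeneration, so a nonzero initial Pl\"ucker coordinate under $\omega(n)$ forces the corresponding generic Pl\"ucker coordinate to be nonzero. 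The characteristic-zero assumption should be used exactly at the point where expanding powers of linear forms produces multinomial coefficients that must be invertible.
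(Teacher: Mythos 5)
Your overall strategy -- verify the two hypotheses of Lemma \ref{nonzerojn} and, for the nontrivial one, show the $J(n)$-Pl\"ucker coordinate is a nonzero polynomial in the $\gamma_i$ by evaluating at one explicit point -- is exactly the paper's. But there is a genuine gap, and one of your two proposed routes contains an error. The first route cannot work as described: $I(d)$ is a monomial ideal, so $\inom_{\omega(n)}(I(d))=I(d)$ for every weight $\omega(n)$, and $I(d)_d$ is spanned by the monomials with exponents $(a,d-a,0)$, $0\le a\le d-1$, and $(0,0,d)$. This exponent set is not $J(n)$ for $n\ge 1$ (which contains exponents of the form $(d-a-1,a,1)$), so the statement ``$\inom_{\omega(n)}(I(d))_d$ is the span of exactly the monomials indexed by $J(n)$'' is false; indeed the only nonvanishing Pl\"ucker coordinate of $I(d)_d$ itself is the one indexed by the generators' exponents. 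The whole point of the theorem is that \emph{after a generic coordinate change} the $J(n)$-coordinate of $g(I(d))_d$ becomes nonzero, and this cannot be read off from the monomial generators or from Proposition \ref{verticesofq}, which is a purely combinatorial statement about the polytope $Q(d)$ and says nothing about which Pl\"ucker coordinates of a given ideal survive.

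Your second route is the correct one and is what the paper does, but the essential content is missing: you must actually exhibit the evaluation point and prove the resulting determinant is nonzero. The paper takes $\gamma_1=\gamma_3=\gamma_4=\gamma_5=\gamma_7=1$ and the remaining $\gamma_i=0$, identifies the resulting $(d+1)\times(d+1)$ minor as a block matrix whose entries are binomial coefficients ${d-i+1\choose j-1}$ and $(i-1){d-i+1\choose j-1}$, and then devotes the entire appendix (Proposition \ref{nonzerodeterminant}) to proving by column operations, Laplace expansions and induction that this determinant is nonzero in characteristic $0$. Saying one should pick ``a suitably generic unipotent matrix so that the monomials get spread out'' defers precisely the step that constitutes the proof; a randomly chosen triangular unipotent matrix gives a minor whose nonvanishing is no easier to certify than the generic one. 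Until that determinant computation is supplied, the argument is incomplete.
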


\begin{proof}
Let $d$ be fixed and for simplicity denote $\gN(I(d),d)$ by $N$. As $I(d)$ is one of the $I(d,q)$ as defined above by Remark \ref{fam} and $\dim_K(I(d)_d)=d+1$, we can apply Lemma \ref{nonzerojn} to $I(d)$ if $J(n)\in N$ for every $n$ with $0\leq n< \frac{d}{3}$. We thus have to show that there exists $\emptyset\neq U\subset \GL_3(K)$ such that the Pl\"ucker coordinates $P_{J(n)}(g(I_d))\neq 0$ for all $g\in U$. %For this it is enough to show that the Pl\"ucker coordinates $P_J(\gamma(I(d))_d)$ are nonzero for every set $J=J(n)$. 
Choose the system of polynomials $$\gamma(y^d),\gamma(xy^{d-1}),\gamma(x^2y^{d-2}),\ldots,\gamma(x^{d-1}y),\gamma(z^d)$$ as a $K(\Gamma)$-basis of the degree $d$ component of $\gamma(I(d))_d$, where $\gamma$ is defined as in Subsection \ref{gammamap}. Fix $0\leq n< \frac{d}{3}$ and choose the ordering $$x^d,x^{d-1}y,x^{d-2}y^2,\ldots,x^ny^{d-n},x^{d-1}z,x^{d-2}yz,x^{d-3}y^2z,\ldots,x^{d-n}y^{n-1}z$$ of the monomials of degree $d$ indexed by $J(n)$. Let $B$ be the $(d+1)\times (d+1)$-matrix with entries $B_{ij}$ the coefficients of the $j$th monomial in the above ordering of monomials given by $J(n)$ in the $i$th polynomial in the above system of generators of $\gamma(I(d))_d$. Note that $B$ is a matrix over $K(\Gamma)$. It now suffices to show that $\det(B)$ is not equal to zero.
%=P_{J(n)}(\gamma(I(d))_d)

As all entries of $B$ are by definition in $K[\Gamma]$, we can evaluate each entry by setting $\gamma_i=a_i$ for some $a_i\in K$. To show that $\det(B)\neq 0$ it is enough to show that it is non-zero after an evaluation at some $a_i\in K$. We choose $\gamma_1=\gamma_3=\gamma_4=\gamma_5=\gamma_7=1$ and $\gamma_2=\gamma_6=\gamma_8=\gamma_9=0$. After this evaluation the matrix $B$ is of the blockform 

$$
\left(
\begin{array}{c|c}
B' & B''\\
\hline
b & 0 
\end{array}
\right),
$$ 

with the following submatrices:

\begin{enumerate}
\item $B'$ is an $d\times (d-n+1)$-matrix with $B'_{ij}={{d-i+1}\choose{j-1}}$ for $i=1,\ldots,d$, $j=1,\ldots,d-n+1$.
\item $B''$ is an $d\times n$-matrix with $B''_{ij}=(i-1){{d-i+1}\choose{j-1}}$ for $i=1,\ldots,d$, $j=1,\ldots,n$.
\item $b$ is a $1\times(d-n+1)$-matrix with $b_{11}=1$ and $b_{1j}=0$ for $j=2,\dots,d-n+1$.
\end{enumerate}

In this description we assume that the characteristic of $K$ is $0$. By Proposition \ref{nonzerodeterminant} in the Appendix we know that $\det(B)\neq0$ for all choices of $d$ and $n$.
\end{proof}

The fact that the ideal $I(d)$ has at least $\frac{d}{3}$ distinct generic initial ideals for $d\in \N$ can be used to show that having at least $\frac{d}{3}$ distinct gins is a generic property in the following sense.

\begin{theorem}\label{genericresult}
Let $d\in \N$, $d\geq 3$. There is a Zariski-open set $\emptyset\neq U\subset K^{(d+1)|\Delta|}$ such that $I(d,q)\subset S$ has at least $\frac{d}{3}$ distinct generic initial ideals for every $q\in U$.
\end{theorem}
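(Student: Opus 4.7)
The plan is to show that the hypotheses of Lemma \ref{nonzerojn}, applied to $I(d,q)$, carve out a non-empty Zariski-open subset $U \subset K^{(d+1)|\Delta|}$; the theorem then follows at once by applying Lemma \ref{nonzerojn} to every $q \in U$. Non-emptiness will come for free from Theorem \ref{main}, via Remark \ref{fam}: the distinguished point $q^{\ast} \in K^{(d+1)|\Delta|}$ which reproduces $I(d)$ lies in $U$, since the proof of Theorem \ref{main} is precisely a verification that $q^{\ast}$ satisfies both hypotheses of Lemma \ref{nonzerojn}.

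For the dimension condition, I would argue that $\dim_K I(d,q)_d = d+1$ if and only if the coefficient matrix of $f_1(q),\ldots,f_{d+1}(q)$ in the monomial basis of $S_d$ has a non-vanishing $(d+1) \times (d+1)$ minor; this defines a Zariski-open subset $U_{\dim} \subset K^{(d+1)|\Delta|}$ that contains $q^{\ast}$, since the generators of $I(d)$ are linearly independent monomials.

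The main step is to show openness of each condition $J(n) \in \gN(I(d,q),d)$ for $0 \leq n < d/3$. Computing the Plücker coordinate $P_{J(n)}(\gamma(I(d,q))_d)$ with respect to the generating set $\gamma(f_1),\ldots,\gamma(f_{d+1})$ of $\gamma(I(d,q))_d$ yields a polynomial $F_n \in K[b_{i\nu},\gamma_1,\ldots,\gamma_9]$ in both the coefficient variables and the entries of $\gamma$. By the very definition of $\gN$, the condition $J(n) \in \gN(I(d,q),d)$ asserts that $F_n(q,\gamma)$ is not identically zero as a polynomial in the $\gamma_j$. Expanding $F_n = \sum_{\alpha} c_{n,\alpha}(b)\, \gamma^{\alpha}$, this says that $c_{n,\alpha}(q) \neq 0$ for at least one $\alpha$; the locus where every $c_{n,\alpha}$ vanishes at $q$ is Zariski-closed, so its complement $U_n$ is Zariski-open. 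Non-emptiness of $U_n$ is exactly the content of the computation carried out in the proof of Theorem \ref{main}: with the explicit evaluation $\gamma_1 = \gamma_3 = \gamma_4 = \gamma_5 = \gamma_7 = 1$, $\gamma_2 = \gamma_6 = \gamma_8 = \gamma_9 = 0$, the Plücker coordinate at $q^{\ast}$ equals $\det(B) \neq 0$, so $q^{\ast} \in U_n$.

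Since $K^{(d+1)|\Delta|}$ is irreducible, the finite intersection $U := U_{\dim} \cap \bigcap_{0 \leq n < d/3} U_n$ of non-empty Zariski-open sets is again non-empty and Zariski-open, which completes the argument. I do not foresee a genuine obstacle; the only point that needs care is emphasizing that the proof of Theorem \ref{main} delivers more than its statement, namely the individual non-vanishing of each $P_{J(n)}$ at $q^{\ast}$, which is precisely what lets the genericity of $I(d)$ spread to a Zariski-open neighborhood of $q^{\ast}$.
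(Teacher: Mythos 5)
Your proposal is correct and follows essentially the same route as the paper: reduce to Lemma \ref{nonzerojn}, observe that its hypotheses are Zariski-open conditions on $q$, and get non-emptiness from the explicit point $q^{\ast}$ realizing $I(d)$ via the determinant computation of Proposition \ref{nonzerodeterminant}. The only (cosmetic) difference is that the paper forms one open set $V$ in $K^{9+(d+1)|\Delta|}$ and projects it to the $q$-coordinates, whereas you expand $F_n=\sum_\alpha c_{n,\alpha}(b)\gamma^\alpha$ and take the non-vanishing locus of the coefficients directly --- which is in fact exactly the justification for why that projection is open, a point the paper asserts without spelling out.
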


\begin{proof}
By Lemma \ref{nonzerojn} we have to determine an open subset $\emptyset\neq U\subset K^{(d+1)|\Delta|}$ such that $\dim_K(I(d,q)_d)=d+1$ and $J(n)\in \gN(I(d,q),d)$ for every $J(n)$ as in Notation \ref{indexsets} and every $q\in U$. For the first condition note that there is a non-empty open subset $\tilde{U}$ of $K^{(d+1)|\Delta|}$ such that $f_1(q),\ldots,f_{d+1}(q)$ are linearly independent for $q\in \tilde{U}$. We can thus assume that $\dim_K(I(d,q)_d)=d+1$ generically. It remains to show that for generic $g\in\GL_3(K)$ the Pl\"ucker coordinates of $g(I(d,q))_d$ corresponding to the columns indexed by $J(n)$ are not equal to zero. 

Let $\gamma: L[x,y,z]\longrightarrow L[\Gamma][x,y,z]$ as in Subsection \ref{gammamap} with $L$ as defined in Notation \ref{extnot}. Let $B$ be the $(d+1)\times {{d+2}\choose{2}}$-matrix of the coefficients of $\gamma(f_1),\ldots,\gamma(f_{d+1})$, i.e. the entry $b_{ij}$ of this matrix is the coefficient of $\gamma(f_i)$ in the basis of all monomials of degree $d$ in $x,y,z$ in reverse lexicographic order. Note that these coefficients are polynomial expressions in the $\gamma_j$ and the $b_{i\nu}$ for $j=1,\ldots,9$, $i=1,\ldots,d+1$, $\nu\in\Delta$. For $(p,q)\in K^{9}\times K^{(d+1)|\Delta|}$ let $B(p,q)$ denote the $(d+1)\times {{d+2}\choose{2}}$-matrix over $K$ obtained by mapping $\gamma_j$ to $p_j$ and $b_{i\nu}$ to $q_{i\nu}$ for every $j,i,\nu$.

For $N(d)$ as in Notation \ref{pluck} and $J\in N(d)$ denote by $B_J$ the matrix consisting of all columns from $B$ indexed by the elements of $J$. By the choice of $B$ the determinant $\det(B_J)$ is a polynomial in the $\gamma_j$ and $b_{i\nu}$ with coefficients in $K$. For $(p,q)\in K^{(d+1)|\Delta|}$ we have $\det(B_J)(p,q)=\det(B(p,q))$.

With the notation of Remark \ref{fam} for $(p,q)\in K^9\times K^{(d+1)|\Delta|}$ with $q_{i\nu_i}=1$ for $i=1,\ldots,d+1$ and $0$ otherwise, and $p_1=p_3=p_4=p_5=p_7=1$ and $p_2=p_6=p_8=p_9=0$ we have that $B(p,q)$ is exactly the matrix with rows $$\gamma(y^d),\gamma(xy^{d-1}),\gamma(x^2y^{d-2}),\ldots,\gamma(x^{d-1}y),\gamma(z^d).$$ By Proposition \ref{nonzerodeterminant} we know that $\det(B(p,q)_{J(n)})\neq 0$ for every $J(n)$ from Notation \ref{indexsets}. Hence, $\det(B_{J(n)})$ is not the zero polynomial in $K[\gamma_1,\ldots,\gamma_9][b_{i\nu}:1\leq i\leq d+1, \nu\in \Delta]$. Let $\emptyset\neq V\subset K^{9+(d+1)|\Delta|}$ be an open subset such that $\det(B(p,q)_{J(n)})\neq 0$ for every $(p,q)\in V$ and every $J(n)$. 

Let $$U=\left\{q\in K^{(d+1)|\Delta|}: \text{ there exists } p\in K^9: (p,q)\in V  \right\}\subset K^{(d+1)|\Delta|},$$ which is a non-empty open subset of $K^{(d+1)|\Delta|}$. For each $q\in U$ if we substitute the $b_{i\nu}$ by the corresponding $q_{i\nu}$ in $\det(B_{J(n)})$, we obtain a polynomial in $K[\gamma_1,\ldots,\gamma_9]$, which is not the zero-polynomial, since by assumption there exists $p\in K^9$ with $\det(B(p,q)_{J(n)})\neq0$. Thus for a given $q\in U$ there exists $\emptyset\neq W(q)\subset \GL_3(K)$ with $\det(B(p,q)_{J(n)})\neq0$ for every $p\in W(q)$. In other words $J(n)\in \gN(d)$ as in Notation \ref{genN} for every ideal $I(d,q)\subset S$, where $q\in U$. By Lemma \ref{nonzerojn} this implies that $I(d,q)$ has at least $\frac{d}{3}$ generic initial ideals for every $q\in U$.
\end{proof}

\section{Appendix}

This appendix contains the proof that the determinants of the matrices describing the relevant Pl\"ucker coordinates needed in Section \ref{mainsec} are not equal to zero. For $d\in \N$ and $0\leq n<d$ consider the matrix
$$
B=\left(
\begin{array}{c|c}
B' & B''\\
\hline
b & 0 
\end{array}
\right),
$$ 

with the following submatrices:

\begin{enumerate}
\item $B'$ is an $d\times (d-n+1)$-matrix with $B'_{ij}={{d-i+1}\choose{j-1}}$ for $i=1,\ldots,d$, $j=1,\ldots,d-n+1$.
\item $B''$ is an $d\times n$-matrix with $B''_{ij}=(i-1){{d-i+1}\choose{j-1}}$ for $i=1,\ldots,d$, $j=1,\ldots,n$.
\item $b$ is a $1\times(d-n+1)$-matrix with $b_{11}=1$ and $b_{1j}=0$ for $j=2,\dots,d-n+1$.
\end{enumerate}

\begin{prop}\label{nonzerodeterminant}
For every $d\in \N$ and $0\leq n<d$ we have $\det(B)\neq0$.
\end{prop}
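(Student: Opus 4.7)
The approach is to first reduce the $(d+1)\times(d+1)$ determinant of $B$ to a $d\times d$ determinant by cofactor expansion, and then analyze the resulting matrix via a Pascal-type recurrence that relates the two blocks $B'$ and $B''$.

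Since the last row of $B$ has its only nonzero entry in the first column ($b_{11}=1$), with zeros covering the remaining positions, Laplace expansion along this row yields $\det(B)=(-1)^d\det(M)$, where $M$ is the $d\times d$ matrix obtained from $B$ by deleting the last row and the first column; it thus suffices to show $\det(M)\neq 0$. Write $v_k\in K^d$ for the column vector with $i$-th entry $\binom{d-i+1}{k}$, so that $v_0=(1,\dots,1)^T$. The columns of $M$ are then, in order, $v_1,v_2,\dots,v_{d-n}$ (the $B'$-block after removal of its leading column $v_0$) followed by $Dv_0,Dv_1,\dots,Dv_{n-1}$ (the $B''$-block), where $D$ denotes the diagonal $d\times d$ matrix with entries $0,1,\dots,d-1$. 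The binomial identity $t\binom{t}{k}=(k+1)\binom{t}{k+1}$ applied with $t=d-i+1$ then yields the crucial relation
$$Dv_k \;=\; (d-k)\,v_k \;-\; (k+1)\,v_{k+1}\qquad(k\geq 0),$$
expressing each $B''$-column as a linear combination of two $v_j$'s.

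Using this identity I would compute the column span of $M$. The column $Dv_0=dv_0-v_1$ supplies the direction $v_0$, which is not otherwise present; for $k\geq 1$ with $k+1\leq d-n$, the column $Dv_k$ already lies in the span of the $B'$-columns; and any $Dv_k$ with $k+1>d-n$ contributes a further new direction $v_{k+1}$. The resulting column span of $M$ is then $\langle v_0,v_1,\ldots,v_{\max(d-n,n)}\rangle$, and the remaining task is to verify that this span is all of $K^d$ and to evaluate the resulting reduced determinant; the latter is an anti-triangular Pascal-style binomial-coefficient determinant whose value is, by a classical identity, a signed product of factorials, visibly nonzero in characteristic zero.

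The main obstacle is precisely this final verification: one must track in detail which $B''$-columns get absorbed into the $B'$-span and which contribute genuinely new directions, and then confirm that the resulting collection $\{v_0,v_1,\ldots,v_{\max(d-n,n)}\}$ really spans $K^d$. The characteristic zero hypothesis is essential throughout, since the scalars $(k+1)$ and $(d-k)$ appearing in the Pascal identity above must all be invertible in $K$ for the column reductions to be valid, and the final factorial expression is guaranteed to be nonzero only in characteristic zero.
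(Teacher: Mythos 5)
Your reduction of $\det(B)$ to the $d\times d$ matrix $M$ with columns $v_1,\dots,v_{d-n},Dv_0,\dots,Dv_{n-1}$, and the identity $Dv_k=(d-k)v_k-(k+1)v_{k+1}$, are both correct. The fatal step is the passage from this identity to a nonzero determinant. When you observe that for $1\le k$ with $k+1\le d-n$ the column $Dv_k$ ``already lies in the span of the $B'$-columns'', you have exhibited a column of the \emph{square} matrix $M$ as a linear combination of \emph{other columns of $M$}; that forces $\det(M)=0$, and no subsequent span computation can undo it. Carried to its end, your identity shows every column of $M$ lies in $\langle v_0,v_1,\dots,v_{\max(d-n,n)}\rangle$, a subspace of dimension $\max(d-n,n)+1$, which is strictly less than $d$ whenever $2\le n\le d-2$. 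So the ``remaining task'' you defer (showing the span is all of $K^d$) is impossible in that range, the anti-triangular binomial determinant you hope to evaluate never materializes, and your argument, completed correctly, \emph{refutes} the proposition rather than proving it. Concretely, for $d=4$, $n=2$ one has $v_1=(4,3,2,1)^T$, $v_2=(6,3,1,0)^T$, $Dv_1=(0,3,4,3)^T=3v_1-2v_2$, and a direct $5\times5$ computation confirms $\det(B)=0$. (Your span argument does correctly single out $n\in\{0,1\}$ and $n=d-1$ as the cases where the claim holds.)

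For what it is worth, this is not a misreading of the matrix on your part: the paper's own proof stumbles at the analogous point. After the Laplace expansion the first column of $B'$ (the vector $v_0$) has been deleted, so the replacement $B''_{i1}\mapsto d\,B'_{i1}-B''_{i1}$ is \emph{not} an elementary column operation on the reduced matrix and does change the determinant; it produces the column $dv_0-Dv_0=v_1$, duplicating an existing column, and indeed the paper's intermediate matrices $(D\mid D')$ have identical first columns. So the proposition as stated (and as invoked for $n=2<d/3$, $d\ge 7$, in the proof of Theorem \ref{main}) appears to be genuinely false for $2\le n\le d-2$; your computation is evidence of a real problem with the statement, not merely a gap in your own write-up.
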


\begin{proof}
To show that $\det(B)\neq 0$, we first do a Laplace expansion with the last row, thereby dropping the matrices $b$ and $0$ in the block form and deleting the first column of $B'$. We then replace $B''_{ij}$ by $(d+1-j)B'_{ij}-B''_{ij}$ for $j=1,\ldots,n$, which corresponds to an elementary column operation and, hence, does not change the absolute value of the determinant. This yields an $d\times d$-matrix

$$
\left(
\begin{array}{c|c}
C & C'
\end{array}
\right),
$$ 

with the submatrices $C,C'$:

\begin{enumerate}
\item $C$ is an $d\times (d-n)$-matrix with entries $C_{ij}={{d-i+1}\choose{j}}$.
\item $C'$ is an $d\times n$-matrix with entries $C'_{ij}=(d-i-j+2){{d-i+1}\choose{j-1}}$.
\end{enumerate}

We can substitute $C_{ij}$ by $C''_{ij}:=j C_{ij}$ for $j=1,\ldots,d-n$ without changing whether the determinant is zero or not. Moreover, we can replace $C''_{ij}$ by $\frac{1}{d-i+1}C''_{ij}$ and $C'_{ij}$ by $\frac{1}{d-i+1}C'_{ij}$ (i.e. multiply the $i$th row of the matrix $(C''|C')$ by $(d-i+1)$). We then obtain a matrix

$$
\left(
\begin{array}{c|c}
D & D'
\end{array}
\right),
$$ 

with 
\begin{enumerate}
\item $D$ is an $d\times (d-n)$-matrix with entries $D_{ij}=\frac{j}{d-i+1}{{d-i+1}\choose{j}}={{d-i}\choose{j-1}}$.
\item $D'$ is an $d\times n$-matrix with entries $D'_{ij}=\frac{d-i-j+2}{d-i+1}{{d-i+1}\choose{j-1}}={{d-i}\choose{j-1}}$.
\end{enumerate}

We will now inductively use row operations and Laplace expansion to eliminate the matrix $D'$ and the last $n$ rows of $(D|D')$. We replace $D_{ij}$ by $$D_{ij}-D_{(i+1)j}={{d-i}\choose{j-1}}-{{d-i-1}\choose{j-1}}={{d-i-1}\choose{j-2}}$$ and $D'_{ij}$ by $$D'_{ij}-D'_{(i+1)j}={{d-i}\choose{j-1}}-{{d-i-1}\choose{j-1}}={{d-i-1}\choose{j-2}}$$ for $i=1,\ldots,d-1$, which does not change the absolute value of the determinant. But then the first column of $D'$ is $0$ except for $D'_{d1}=1$. Using Laplace expansion on this column we get the reduced $(d-1)\times (d-1)$-matrix 

$$
\left(
\begin{array}{c|c}
D_1 & D'_1
\end{array}
\right),
$$ 

with $\det(D|D')=\det(D_1|D'_1)$, where
\begin{enumerate}
\item $D_1$ is an $(d-1)\times (d-n)$-matrix with entries $(D_1)_{ij}={{d-i-1}\choose{j-2}}$.
\item $D'_1$ is an $(d-1)\times (n-1)$-matrix with entries $(D'_1)_{ij}={{d-i-1}\choose{j-1}}$.
\end{enumerate}

This process is repeated $n$-times, so we obtain a $(d-n)\times (d-n)$-matrix $D_n$ with $(D_n)_{ij}={{d-i-n}\choose{j-n-1}}$ for $i,j=1,\ldots, d-n$, such that $\det(D_n)=\det(D|D')$.

We replace $(D_n)_{ij}$ by the entry $E_{ij}:=\frac{(d-i)!}{(d-n-i)!}\cdot\frac{(n-1+j)!}{(j-1)!}\cdot(D_n)_{ij}={{d-i}\choose{j-1}}$. As the first factor is a multiplication of each row of $D_n$ with a non-zero number and the second one is a multiplication of each column by a non-zero number, we know that $\det(D_n)\neq0$ if and only if $\det(E)\neq0$.

But $|\det(E)|=1$, which follows by induction on $d-n$. For $d-n=1$, we have the single entry ${{d-1}\choose{0}}=1$, so determinant of $U$ is $1$. Let $d-n>1$. Set $E'_{ij}=E_{(i-1)j}-E_{ij}$ for $i=2,\ldots,d-n$, which corresponds to subtracting the $i$th row of $E$ from the $(i-1)$st. Then $|\det(E)|=|\det(E')|$. As $E'_{11}=1$ and $E'_{i1}=0$ for $i>1$, we have $|\det(E')|=1\cdot \det(E'')$, where $E''_{ij}={{d-i-1}\choose{j-1}}-{{d-i}\choose{j-1}}={{d-i-1}\choose{j-2}}$ for $i,j=2,\ldots,d-n$. By the inductive hypothesis $|\det(E'')|=1$ proving the claim.
\end{proof}

\end{document}